\documentclass[11pt]{article}

\usepackage[T1]{fontenc}
\usepackage[margin=1.02in]{geometry}
\usepackage{amsmath,amssymb,amsthm,mathtools}
\usepackage{array,enumitem,microtype}
\usepackage[dvipsnames]{xcolor}
\usepackage[
  colorlinks=true,
  linkcolor=MidnightBlue,
  citecolor=MidnightBlue,
  urlcolor=MidnightBlue,
  pdftitle={Unit Actions on Homometric Five-Point Subsets of Cyclic Groups},
  pdfauthor={Pakin Methawisal},
  pdfsubject={Unit-group actions on homometric five-point subsets},
  pdfkeywords={homometric sets, binary bracelets, cyclic groups, unit-group actions, periodic phase retrieval, Burnside lemma},
]{hyperref}
\usepackage[nameinlink,capitalize,noabbrev]{cleveref}

\newtheorem{theorem}{Theorem}[section]
\newtheorem{proposition}[theorem]{Proposition}
\newtheorem{lemma}[theorem]{Lemma}
\newtheorem{corollary}[theorem]{Corollary}

\newcommand{\ord}{\operatorname{ord}}
\newcommand{\Fix}{\operatorname{Fix}}
\newcommand{\Acal}{\mathcal A}
\newcommand{\Bcal}{\mathcal B}
\newcommand{\Ccal}{\mathcal C}
\newcommand{\Dcal}{\mathcal D}
\newcommand{\Ecal}{\mathcal E}
\newcommand{\Fcal}{\mathcal F}
\newcommand{\Gcal}{\mathcal G}

\title{Unit Actions on Homometric Five-Point Subsets of Cyclic Groups\\
\large Orbit and fixed-point refinement of the seven-family classification}
\author{
Pakin Methawisal\\
Mahidol University International College, Mahidol University\\
\texttt{pakin.met@student.mahidol.edu}
}
\date{}

\begin{document}
\maketitle

\begin{abstract}
Erickson and Jones classified the nontrivial homometry classes of five-point binary bracelets into seven families, Types~A--G, and conjectured that each family is invariant under multiplication by units modulo sign. We prove this conjecture and show that the induced actions are diagonal for Type~A, multiplicative for Types~B--E, affine for Type~F, and regular through $U_{20}/\{\pm1\}\cong C_4$ for Type~G. We derive closed arithmetic formulas for the numbers of unit orbits and classes fixed by every unit, and determine the orbit-size distributions in all seven types, including explicit stabilizer descriptions for Types~A and~F.
\end{abstract}

\medskip
\noindent\textbf{Keywords.} homometric sets; binary bracelets; cyclic groups; unit-group actions; orbit enumeration; stabilizers.

\medskip
\noindent\textbf{MSC 2020.} Primary 05E18; Secondary 05A15.

\section{Introduction}

Recovering a periodic binary signal from its Fourier magnitude is a basic phase-retrieval problem \cite{BendoryEdidin2020,Rosenblatt1984}. The magnitude determines the periodic autocorrelation, but it need not determine the support: distinct subsets of a cyclic group can have the same multiset of pairwise differences modulo sign, equivalently the same pairwise Lee-distance multiset. Such subsets are called homometric. In mathematical music theory, the same relation is known as the Z-relation \cite{Mandereau2011a,Mandereau2011b}.

Erickson and Jones classified the nontrivial homometry classes of five-point binary bracelets in $\mathbb Z_n$ into seven disjoint parametric types A--G \cite{EricksonJones2026}. They also asked for the corresponding unit-orbit structure and conjectured that each type is invariant under multiplication by units modulo sign \cite[Problem~5.3 and Conjecture~5.4]{EricksonJones2026}.

We answer these questions by replacing the original indices with intrinsic sign-class parameters. This makes the unit actions explicit and reduces the resulting orbit and fixed-point problems to elementary arithmetic and Burnside's lemma. The resulting formulas give a unit-orbit refinement of the seven-family classification. We further determine the orbit-size distributions in all seven types, with explicit stabilizer descriptions for the diagonal and affine actions in Types~A and~F.

\section{Families and intrinsic parameters}\label{sec:families}

Let $\mathbb Z_N=\mathbb Z/N\mathbb Z$. For a subset $S\subseteq\mathbb Z_N$, write $T_c(S)=S+c$ and $R_c(S)=c-S$ for translation by $c$ and the reflection $x\mapsto c-x$, respectively. A five-point binary bracelet is a dihedral orbit $[S]$ of a subset $S\subseteq\mathbb Z_N$, where the dihedral action is generated by the maps $T_c$ and $R_c$. For brevity, if \(S=\{a_1,\ldots,a_5\}\), we write \([a_1,\ldots,a_5]\) for the bracelet \([S]\).

For $a,b\in\mathbb Z_N$, choose representatives in $\{0,\ldots,N-1\}$. Their Lee distance is
\[
\min\{|a-b|,\,N-|a-b|\}.
\]
Two bracelets are homometric when their multisets of pairwise Lee distances agree.

Write
\[
 U_N=(\mathbb Z_N)^\times,
 \qquad
 \widetilde U_N=U_N/\{\pm1\},
 \qquad
 Q_N=\mathbb Z_N/\{x\sim -x\}.
\]
The sign class of $x$ is denoted $[x]_\pm$, and its order is the additive order of either representative. For $x\in\mathbb Z_R$,
\[
\ord_R(x)=\frac{R}{\gcd(R,x)}.
\]

\begin{lemma}\label{lem:homometry}
Multiplication by units preserves homometry and therefore induces an action of $\widetilde U_N$ on homometry classes.
\end{lemma}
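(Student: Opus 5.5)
The plan is to encode the Lee-distance multiset as a multiset of sign classes in $Q_N$, and check that multiplication by a unit permutes $Q_N$ compatibly with the dihedral action.

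First, for $a,b\in\mathbb Z_N$ the Lee distance between $a$ and $b$ is the canonical representative in $\{0,\ldots,\lfloor N/2\rfloor\}$ of the sign class $[a-b]_\pm\in Q_N$. So the Lee-distance multiset of $S$ determines, and is determined by, the multiset
\[
\Delta(S)=\{\!\{[a-b]_\pm : \{a,b\}\subseteq S,\ a\neq b\}\!\}
\]
of unordered pairwise difference classes. This multiset is invariant under $T_c$, since differences are unchanged, and under $R_c$, since differences change sign. Hence $\Delta$ is well defined on bracelets, and two bracelets are homometric exactly when their $\Delta$'s agree.

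Second, fix $u\in U_N$. The map $x\mapsto ux$ is a bijection of $\mathbb Z_N$, so $uS$ is again a five-point subset. Multiplication by $u$ commutes with negation, so it induces a bijection $m_u$ of $Q_N$ with $m_u([x]_\pm)=[ux]_\pm$. Because $ua-ub=u(a-b)$, we get $\Delta(uS)=m_u(\Delta(S))$ as multisets, with multiplicities preserved since $m_u$ is injective. Therefore $\Delta(S)=\Delta(S')$ implies $\Delta(uS)=\Delta(uS')$.

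Third, the map $S\mapsto uS$ descends to bracelets. Indeed $u(S+c)=uS+uc$ and $u(c-S)=uc-uS$, so the dihedral orbit of $S$ goes to the dihedral orbit of $uS$. Combined with the second step, $u$ sends homometry classes to homometry classes.

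Fourth, we check the group action. The assignment $u\mapsto(\text{class of }S\mapsto\text{class of }uS)$ satisfies $(uv)S=u(vS)$ and $1\cdot S=S$. Moreover $-1$ acts trivially already on bracelets, because $-S=R_0(S)$. So the action of $U_N$ factors through $\widetilde U_N=U_N/\{\pm1\}$.

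There is no real obstacle here. The only point needing care is the first step: the Lee distance must be identified with the sign class, that is, $\min\{|a-b|,N-|a-b|\}$ must be recognised as a class invariant under $x\mapsto -x$. One must also note that multiplicities are tracked by unordered pairs, so the multiset identity in the second step is genuinely an identity of multisets.
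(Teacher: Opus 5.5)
Your proposal is correct and follows essentially the same route as the paper: you identify Lee distances with difference sign classes in $Q_N$, apply the permutation $[x]_\pm\mapsto[ux]_\pm$ of $Q_N$ uniformly to both multisets, use $u(S+c)=uS+uc$ and $u(c-S)=uc-uS$ to get a well-defined map on bracelets, and use $-S=R_0(S)$ to factor through $\widetilde U_N$. Your explicit checks that $\Delta$ is dihedrally invariant and that the action axioms hold are small additions that the paper leaves implicit.
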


\begin{proof}
First, multiplication by a unit is well defined on bracelets. For $u\in U_N$,
\[
uT_c(S)=u(S+c)=uS+uc=T_{uc}(uS),
\]
and
\[
uR_c(S)=u(c-S)=uc-uS=R_{uc}(uS).
\]
Thus multiplication by $u$ carries translations to translations and reflections to reflections, and hence sends dihedrally equivalent subsets to dihedrally equivalent subsets.

Two differences $x,y\in\mathbb Z_N$ have the same Lee distance exactly when $[x]_\pm=[y]_\pm$. Thus two bracelets are homometric exactly when their multisets of difference sign classes agree. Multiplication by a unit induces the permutation
\[
[x]_\pm\longmapsto[ux]_\pm
\]
of $Q_N$, so it applies the same permutation to both difference-sign multisets and therefore preserves homometry.

Finally, $-S=R_0(S)$, so multiplication by $-1$ acts trivially on bracelets. Hence $u$ and $-u$ induce the same action, and the $U_N$-action factors through $\widetilde U_N=U_N/\{\pm1\}$.
\end{proof}

The unit action and its preservation of homometry are also described in \cite[Section~5]{EricksonJones2026}. We shall use the standard fact that if $R\mid N$, then the reduction map $U_N\to U_R$ is surjective.

For the seven families, let $\Acal_m(i,j),\ldots,\Gcal_m(i)$ denote entire homometry classes. We use the representatives obtained by scaling the continuous representatives in \cite[Proposition~3.2]{EricksonJones2026}, using the unified Type~G parametrization in \cite[Remark~3.3]{EricksonJones2026}. Their representatives are
\begin{align*}
\Acal_m(i,j):\ &N=2m,
 &&\big\{[0,i,i+j,m+i-j,m],\
 [0,m+i,i+j,m+i-j,m]\big\};\\
\Bcal_m(i):\ &N=5m,
 &&\big\{[0,i,m,2m,2m+i],\
 [0,i,m,m+i,3m]\big\};\\
\Ccal_m(i):\ &N=6m,
 &&\big\{[0,m+i,2m,2m+i,4m],\
 [0,2m,2m+i,3m+i,4m]\big\};\\
\Dcal_m(i):\ &N=6m,
 &&\big\{[i,m-i,2m,2m+i,3m],\
 [0,i,m-i,2m+i,5m]\big\};\\
\Ecal_m(i):\ &N=6m,
 &&\big\{[0,i,m+i,2m+i,3m],\
 [0,i,m,2m,3m+i], [0,m-i,m,2m,4m-i]\big\};\\
\Fcal_m(i):\ &N=8m,
 &&\big\{[0,i,m,2m+i,4m],\
 [0,i,m+i,2m,4m+i]\big\};\\
\Gcal_m(i):\ &N=20m,
 &&\big\{[0,5m,8mi,15m+4mi,5m-4mi], [0,5m,4mi,15m+8mi,15m-4mi]\big\}, \\
 &&&\qquad i\in\{1,2,3,4\}.
\end{align*}

The original index ranges become particularly simple after passing to sign classes.

Explicitly, the parameter maps are
\[
\Acal_m(i,j)\longmapsto([i]_\pm,[j]_\pm),\qquad
\Bcal_m(i),\ldots,\Fcal_m(i)\longmapsto[i]_\pm,\qquad
\Gcal_m(i)\longmapsto i,
\]
where the ambient sign-class space is the one specified below.

For Type~F, write
\[
\langle m\rangle=\{0,m,\ldots,7m\}\leq\mathbb Z_{8m}.
\]

\begin{proposition}[Intrinsic parameter spaces]\label{prop:parameters}
The seven families are parameterized by the following sets:
\begin{enumerate}[label=\textup{(\Alph*)},leftmargin=2.35em]
\item ordered pairs $(\alpha,\beta)$ of nonzero sign classes in $Q_m$, neither of order $2$, such that
\[
\alpha\ne\beta,
\qquad
\ord(\beta)=3\Longrightarrow\ord(\alpha)=6;
\]
\item nonzero sign classes in $Q_{5m}$ excluding orders $5$ and $10$;
\item all nonzero sign classes in $Q_m$;
\item nonzero sign classes in $Q_{3m}$ excluding orders $2$, $3$, and $6$;
\item nonzero sign classes in $Q_m$ excluding order $2$;
\item sign classes in $Q_{8m}$ represented outside $\langle m\rangle$;
\item the four labels $\{1,2,3,4\}$.
\end{enumerate}
\end{proposition}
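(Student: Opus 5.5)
The proof will go family by family, using only what the paper takes from \cite[Proposition~3.2]{EricksonJones2026}: the representatives listed above and the index ranges on which they are nondegenerate and nontrivial. For each type the same three steps are carried out.

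\emph{Well-definedness.} Show that the homometry class depends on the index only through the proposed parameter. In Types~A--E the index $i$ (and $j$) only matters modulo $m$, $5m$, $m$, $3m$, $m$ respectively, and replacing $i$ by its negative gives the same class. For example, in Type~C we have $N=6m$, and replacing $i$ by $i+m$ or by $-i$ should amount to a translation or reflection $T_c$, $R_c$, possibly combined with swapping the two bracelets of the class. In Type~F replacing $i$ by $-i$, or by $i+8m$, does the same. These are direct computations with $T_c$ and $R_c$ applied to the listed sets.

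\emph{Exclusions.} Show that the excluded sign classes are exactly the parameter values at which the construction breaks down. At such a value either two listed points coincide (for instance $i\equiv 0$, or $2i\equiv 0$ giving order~$2$), or the listed bracelets become dihedrally equivalent, so the class is trivial. Alternatively the class may coincide with a class of another type, which the disjointness in \cite{EricksonJones2026} rules out. So for each family I will list the congruences that make points collide or bracelets equivalent, and rewrite them as conditions on $\ord$ of the sign class:
\begin{itemize}[label={}]
\item orders $5,10$ in $Q_{5m}$ for Type~B, where $5i\equiv0$ folds onto the $m$-lattice;
\item orders $2,3,6$ in $Q_{3m}$ for Type~D;
\item the condition $i\in\langle m\rangle$ for Type~F;
\item the conditions $\alpha\neq\beta$, no order $2$, and $\ord(\beta)=3\Rightarrow\ord(\alpha)=6$ for Type~A.
\end{itemize}
Type~G is the special case: it is just the unified parametrization of \cite[Remark~3.3]{EricksonJones2026} with labels $1,2,3,4$, so nothing needs proving there.

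\emph{Injectivity.} Show that distinct parameters give distinct classes. One route is to compute the difference-sign multiset of one representative and read the parameter off from it; for example, $[i]_\pm$ is recovered as the unique difference class not lying in $\langle m\rangle$ with a prescribed multiplicity. The other route is to check that the original index ranges in \cite{EricksonJones2026} were already chosen to be irredundant and are in bijection with the stated sign-class sets.

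The main obstacle I expect is Type~A. It has two parameters, and the exceptional condition $\ord(\beta)=3\Rightarrow\ord(\alpha)=6$ is subtle. When $3j\equiv0 \pmod m$, the two bracelets $[0,i,i+j,m+i-j,m]$ and $[0,m+i,i+j,m+i-j,m]$ should become dihedrally equivalent unless $\alpha$ has order~$6$, or possibly degenerate into another type. To handle this I would first try to find an explicit reflection $R_c$ carrying one representative to the other when $\ord(\beta)=3$. I would then determine exactly when that map fails to exist, and check that this happens precisely when $\ord(\alpha)=6$. The condition $\alpha\neq\beta$ is easier: when $i=\pm j$, points coincide or the pair degenerates, and this I would dispatch by direct substitution.
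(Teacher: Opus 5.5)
\textbf{Gap in the Exclusions step.} Your outer skeleton matches the paper: dihedral identities for well-definedness, then translating index ranges into order strata, then injectivity from the irredundancy of the index sets of \cite{EricksonJones2026} (your second route). The gap is in the Exclusions step, at the point you flag as the main obstacle. The excluded values are not only those where the listed representatives collide or become dihedrally equivalent. They also record that the listed bracelets form an \emph{entire} homometry class and that the types are disjoint. Your planned mechanism for Type~A is wrong.

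When $\ord(\beta)=3$, no reflection identifies the two bracelets. For $m=9$, $i=1$, $j=3$, the sets $\{0,1,4,7,9\}$ and $\{0,4,7,9,10\}$ in $\mathbb Z_{18}$ have different gap multisets, $\{1,2,3,3,9\}$ and $\{1,2,3,4,8\}$. What actually happens is this. Put $k=m/3$ and let $E^{(1)}_i,E^{(2)}_i,E^{(3)}_i$ be the bracelets of $\Ecal_k(i)$. Then $A^{(1)}_{i,k}=E^{(1)}_i$ and $R_{3k+i}(A^{(2)}_{i,k})=E^{(2)}_i$ in $\mathbb Z_{6k}$. So the Type~A pair is two of the three bracelets of $\Ecal_{m/3}(i)$ and is not a full homometry class. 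The case $\ord(\alpha)=6$ survives because for $i=m/6$ one has $E^{(2)}_i=E^{(3)}_i$, so the Type~E triple collapses to a pair.

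The same pattern appears elsewhere:
\begin{itemize}
\item Order $10$ in Type~B is not a case of $5i\equiv0$; note $5\cdot m/2\not\equiv0\pmod{5m}$. Instead $\Bcal_m(m/2)$ is a Type~A class, e.g.\ $\Bcal_2(1)=\Acal_5(1,2)$ in $\mathbb Z_{10}$.
\item Order $2$ in Type~E is the same collapse $E^{(2)}_{m/2}=E^{(3)}_{m/2}$, seen from the Type~E side.
\end{itemize}
So your search for $R_c$ would fail. More fundamentally, an analysis of degeneracies of the representatives alone cannot certify that the admissible sets are exactly right.

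\textbf{How the paper avoids this, and two smaller points.} The paper does not rederive the exclusions. It takes as input the canonical index sets of \cite[Theorem~4.1]{EricksonJones2026}, which already encode completeness and disjointness. It checks, via least nonnegative representatives, that these sets match the stated order strata. Bijectivity then follows from the surjection given by the classification together with the count in the proof of \cite[Lemma~4.3]{EricksonJones2026}. You should replace your Exclusions step with this bookkeeping and use your second injectivity route.

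Your first injectivity route would be a genuinely independent argument, but it is only sketched. In Type~A it must recover an \emph{ordered} pair, and the difference multisets of $\Acal_m(i,j)$ and $\Acal_m(j,i)$ differ only in $[m-2j]_\pm$ versus $[m-2i]_\pm$.

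Finally, Type~G is not free. You still need the four labels to give four distinct classes, which the paper checks via the repeated distances $1,3,7,9$.
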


\begin{proof}
The following parameter identities hold:
\begin{align*}
\Acal_m(i+m,j)&=\Acal_m(i,j),&
\Acal_m(i,j+m)&=\Acal_m(i,j),\\
\Acal_m(-i,j)&=\Acal_m(i,j),&
\Acal_m(i,-j)&=\Acal_m(i,j),\\
\Bcal_m(-i)&=\Bcal_m(i),&
\Bcal_m(i+5m)&=\Bcal_m(i),\\
\Ccal_m(-i)&=\Ccal_m(i),&
\Ccal_m(i+m)&=\Ccal_m(i),\\
\Dcal_m(-i)&=\Dcal_m(i),&
\Dcal_m(i+3m)&=\Dcal_m(i),\\
\Ecal_m(-i)&=\Ecal_m(i),&
\Ecal_m(i+m)&=\Ecal_m(i),\\
\Fcal_m(-i)&=\Fcal_m(i),&
\Fcal_m(i+8m)&=\Fcal_m(i).
\end{align*}
They are verified by explicit translations and reflections in \cref{app:param-identities}.

Choosing the least nonnegative representative of each sign class gives the following exact correspondence; exceptional values such as \(m/2\) are omitted whenever they are nonintegral:
\[
\begin{array}{c|c|c}
\text{Type} & \text{canonical indices of \cite[Theorem~4.1]{EricksonJones2026}} & \text{intrinsic description} \\
\hline
\mathrm A &
\text{\small$\begin{matrix} 1\le i,j<m/2,\ i\ne j, \\ j=m/3\Longrightarrow i=m/6 \end{matrix}$} &
\text{\small$\begin{matrix} \alpha,\beta\ne0,\ \ord(\alpha),\ord(\beta)\ne2,\ \alpha\ne\beta, \\ \ord(\beta)=3\Longrightarrow\ord(\alpha)=6 \end{matrix}$} \\
\mathrm B & 1\le i\le5m/2,\ i\notin\{m/2,m,3m/2,2m\} & \alpha\ne0,\ \ord(\alpha)\notin\{5,10\} \\
\mathrm C & 1\le i\le m/2 & \alpha\ne0\text{ in }Q_m \\
\mathrm D & 1\le i<3m/2,\ i\notin\{m/2,m\} & \alpha\ne0,\ \ord(\alpha)\notin\{2,3,6\} \\
\mathrm E & 1\le i<m/2 & \alpha\ne0,\ \ord(\alpha)\ne2 \\
\mathrm F & 1\le i<4m,\ i\notin\{m,2m,3m\} & [i]_\pm\not\subseteq \langle m\rangle.
\end{array}
\]
Using the formula for $\ord_R(x)$, the exclusions in the canonical index sets correspond exactly to the excluded order strata in the intrinsic descriptions: orders $5$ and $10$ in Type~B, orders $2,3,6$ in Type~D, order $2$ in Type~E, and the sign classes contained in $\langle m\rangle$ in Type~F. Hence the intrinsic parameter sets above are in bijection with the canonical index sets of \cite[Theorem~4.1]{EricksonJones2026}.

For each Type~$T\in\{\mathrm A,\ldots,\mathrm F\}$, the proof of \cite[Lemma~4.3]{EricksonJones2026} shows that the number of Type~$T$ homometry classes equals the cardinality of its canonical index set. The classification theorem already gives a surjection from that index set onto the Type~$T$ classes, so this map is bijective. This proves the asserted parameterizations for Types~A--F. The distance calculation in the proof of \cref{thm:action} distinguishes the four Type~G classes.
\end{proof}

\section{Proof of the unit-invariance conjecture}\label{sec:action}

For Type~F, define
\[
\varepsilon(u)=
\begin{cases}
0,&u\equiv\pm1\pmod8,\\
1,&u\equiv\pm3\pmod8,
\end{cases}
\qquad u\in U_{8m},
\]
and define a map on sign classes by
\[
\Phi_u([i]_\pm)=[ui+4m\varepsilon(u)]_\pm.
\]
This is independent of the representative of the sign class: replacing \(i\) by \(-i\) changes the displayed representative to
\[
-ui+4m\varepsilon(u)\equiv-(ui+4m\varepsilon(u))\pmod{8m}.
\]
It is also independent of the choice of representative of \(u\) modulo sign, since \(\varepsilon(-u)=\varepsilon(u)\) and
\[
[-ui+4m\varepsilon(u)]_\pm=[ui+4m\varepsilon(u)]_\pm.
\]
Thus \(\Phi_u\) depends only on the class of \(u\) in \(\widetilde U_{8m}\).

\begin{theorem}[Unit action on Types A--G]\label{thm:action}
Under the parameterizations of \cref{prop:parameters}, multiplication by $u\in U_N$ acts as
\begin{align*}
\textup{A: }&(\alpha,\beta)\longmapsto(u\alpha,u\beta),\\
\textup{B--E: }&\alpha\longmapsto u\alpha,\\
\textup{F: }&\alpha\longmapsto\Phi_u(\alpha),\\
\textup{G: }&\text{a regular action through }U_{20}/\{\pm1\}\cong C_4.
\end{align*}
Consequently every Type~A--G is invariant under $\widetilde U_N$.
\end{theorem}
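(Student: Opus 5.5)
The plan is to compute, type by type, the image under $u$ of one representative bracelet, and then recognize that image, up to dihedral moves, as a representative of the claimed target class. By \cref{lem:homometry}, multiplication by $u$ sends a whole homometry class to a whole homometry class. So it is enough to track a single member of each class. By \cref{prop:parameters}, each class is then pinned down by its intrinsic parameter.

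\textbf{Types A--E.} I will use that $u$ is coprime to $N$, so $um\equiv m\cdot(u\bmod{N/m})$. I then rewrite $uS$ in the representative's shape:
\begin{itemize}
\item \emph{Type A ($N=2m$).} Here $u$ is odd, so $um\equiv m\pmod{2m}$. The first representative $[0,i,i+j,m+i-j,m]$ maps to $[0,ui,ui+uj,m+ui-uj,m]$, which is $\Acal_m(ui,uj)$ on the nose.
\item \emph{Types B--E.} The multiples $km$ appearing in the representatives become $k(um)$. I reduce $u$ modulo $5$ (resp.\ $6$), i.e.\ $u\equiv\pm1,\pm2$ (resp.\ $u\equiv\pm1$).
\begin{itemize}
\item For $u\equiv\pm1$ the representative maps directly to the one with parameter $\pm ui$, hence to class $u\alpha$.
\item For Type~B with $u\equiv\pm2\pmod5$, the pattern $\{0,m,2m\}$ becomes $\{0,2m,4m\}$. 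I expect this to coincide, after a translation or reflection, with the \emph{other} listed representative with parameter $ui$; I would check this by writing $u=2+5k$.
\end{itemize}
\end{itemize}
The parameter $ui$ only matters modulo the period recorded in \cref{prop:parameters}. The order conditions defining the parameter sets are preserved because multiplication by a unit preserves additive order. That gives invariance of Types A--E.

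\textbf{Type F ($N=8m$).} Here $u$ is odd and $u\bmod 8\in\{\pm1,\pm3\}$.
\begin{itemize}
\item For $u\equiv\pm1\pmod8$ we have $um\equiv\pm m$ modulo multiples of $8m$ (up to the relevant reduction). The representative $[0,i,m,2m+i,4m]$ goes to $[0,ui,\pm m,\pm2m+ui,4m]$, which is $\Fcal_m(ui)$ after a reflection if needed.
\item For $u\equiv\pm3\pmod8$ the pattern is $m\mapsto 3m$, $2m\mapsto 6m$, $4m\mapsto 4m$. I will show that the image, after a translation by $4m$ (or a reflection), becomes a representative with parameter $ui+4m$. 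This is the step producing $\varepsilon(u)$.
\end{itemize}
I will also check that $\Phi_u$ preserves the condition $[i]_\pm\not\subseteq\langle m\rangle$. This holds because $\langle m\rangle$ is $u$-stable and contains $4m$.

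\textbf{Type G ($N=20m$).} The representatives involve only multiples of $m$. So $uS$ depends on $u\bmod 20$, and the action factors through $U_{20}/\{\pm1\}=\{\pm1,\pm3,\pm7,\pm9\}\cong C_4$ (generated by $3$). I will compute the images under $u=3$ of the label-$i$ representative and show they permute the labels $1,2,3,4$ cyclically. To identify labels I will use a distance invariant. The natural one is the multiset of Lee distances, or better the multiset of sign classes of differences. These differ between labels because, for example, the difference $8mi$ has order depending on $i$. That same computation separates the four classes, as promised in \cref{prop:parameters}. A transitive action of $C_4$ on four points is regular.

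\textbf{Main obstacle.} The hard part is the non-identity residues: Type~B with $u\equiv\pm2\pmod5$, Type~F with $u\equiv\pm3\pmod8$, and Type~G. In these cases the image of a representative is not literally a representative, and the explicit dihedral move has to be found. I would first try, for each case, the candidate translation that realigns the pair of multiples of $m$ $\{0,4m\}$ (resp.\ $\{0,5m\}$), and verify it symbolically.
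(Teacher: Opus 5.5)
Your plan is essentially the paper's proof: the same residue splits of $u$ modulo $5,6,8,20$, direct substitution for Types~A and C--E, explicit dihedral moves for Types~B and~F, and the generator $3$ for Type~G, and tracking a single bracelet per class is legitimate by \cref{lem:homometry}, since the listed sets are entire homometry classes. Two details need fixing. In Type~F with $u\equiv3\pmod8$, translation by $4m$ does not realign the image; you need a reflection, e.g.\ $R_{4m}(uF^{(1)}_i)=F^{(1)}_{4m-ui}$. In Type~G, $\ord(8mi)=20/\gcd(20,8i)=5$ for every $i$, so that difference does not separate the labels; what separates them is the repeated Lee distance $m,3m,7m,9m$, which multiplication by $3$ sends $1\to3\to9\to7\to1$, giving the cycle $1\mapsto2\mapsto4\mapsto3\mapsto1$ with no dihedral witnesses needed.
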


\begin{proof}
\textbf{Type A.}
Here $N=2m$. Every unit is odd, hence $um\equiv m\pmod{2m}$, and direct substitution gives
\[
 u\Acal_m(i,j)=\Acal_m(ui,uj).
\]

\smallskip
\noindent\textbf{Type B.}
Modulo sign, choose $u\equiv1$ or $2\pmod5$ and put $t=ui$. The first case gives $u\Bcal_m(i)=\Bcal_m(t)$. In the second case the image bracelets are
\[
X=[0,t,2m,4m,4m+t],
\qquad
Y=[0,t,m,2m,2m+t].
\]
Writing the two bracelets of $\Bcal_m(-t)$ as $B^{(1)}_{-t}$ and $B^{(2)}_{-t}$ in the order displayed in \cref{sec:families}, one has in $\mathbb Z_{5m}$
\[
R_0(X)=B^{(2)}_{-t},
\qquad
R_{2m}(Y)=B^{(1)}_{-t}.
\]
Hence
\[
u\Bcal_m(i)=\Bcal_m(-t)=\Bcal_m(t).
\]

\smallskip
\noindent\textbf{Types C, D, and E.}
Here $N=6m$. Since $U_6=\{\pm1\}$, after passing modulo sign we may choose a representative with $u\equiv1\pmod6$. Hence, for every integer $r$,
\[
u(rm)\equiv rm\pmod{6m}.
\]
All fixed multiples of $m$ in the displayed representatives are therefore unchanged, while each occurrence of $i$ is replaced by $ui$. Direct substitution gives
\[
 u\Ccal_m(i)=\Ccal_m(ui),
 \qquad
 u\Dcal_m(i)=\Dcal_m(ui),
 \qquad
 u\Ecal_m(i)=\Ecal_m(ui).
\]

\smallskip
\noindent\textbf{Type F.}
Modulo sign, choose $u\equiv1$ or $3\pmod8$ and put $t=ui$. The first case gives $u\Fcal_m(i)=\Fcal_m(t)$. If $u\equiv3\pmod8$, the image bracelets are
\[
X'=[0,t,3m,6m+t,4m],
\qquad
Y'=[0,t,3m+t,6m,4m+t].
\]
Set $s=4m-t$. Writing the two bracelets of $\Fcal_m(s)$ as $F^{(1)}_s$ and $F^{(2)}_s$ in the order displayed in \cref{sec:families}, one has in $\mathbb Z_{8m}$
\[
R_{4m}(X')=F^{(1)}_s,
\qquad
R_0(Y')=F^{(2)}_s.
\]
Hence
\[
 u\Fcal_m(i)=\Fcal_m(s)=\Fcal_m(4m-ui).
\]
Since
\[
[4m-ui]_\pm=[ui+4m]_\pm,
\]
this is exactly the action \(\Phi_u\) defined above.

\smallskip
\noindent\textbf{Type G.}
Every coordinate in the Type~G representatives is a multiple of $m$, so the action depends only on $u\bmod20$. Dividing by $m$ and normalizing dihedrally gives the four classes
\begin{align*}
1&:\ \{[0,1,2,5,14],[0,1,2,6,9]\},\\
2&:\ \{[0,1,3,8,17],[0,1,4,7,9]\},\\
3&:\ \{[0,1,3,9,16],[0,1,4,6,13]\},\\
4&:\ \{[0,1,3,7,12],[0,1,4,13,15]\}.
\end{align*}
For $i=1,2,3,4$, the common normalized distance multiset contains each of $1,\ldots,9$ once, with the repeated distance respectively $1,3,7,$ and $9$. Hence the four homometry classes are distinct.

Write the two normalized bracelets in row $i$ as $G_i^{(1)}$ and $G_i^{(2)}$. The four transitions under multiplication by $3$ are witnessed explicitly by
\[
\begin{array}{c|cc}
1\to2&R_3(3G_1^{(1)})=G_2^{(1)}&R_7(3G_1^{(2)})=G_2^{(2)}\\
2\to4&R_4(3G_2^{(1)})=G_4^{(2)}&T_0(3G_2^{(2)})=G_4^{(1)}\\
4\to3&T_0(3G_4^{(1)})=G_3^{(1)}&T_1(3G_4^{(2)})=G_3^{(2)}\\
3\to1&R_9(3G_3^{(1)})=G_1^{(2)}&T_2(3G_3^{(2)})=G_1^{(1)}.
\end{array}
\]
Thus multiplication by $3$ gives the four-cycle
\[
1\longmapsto2\longmapsto4\longmapsto3\longmapsto1.
\]
The quotient $U_{20}/\{\pm1\}$ has four elements, and $[3]$ has order $4$, so it generates this quotient. Since the generator acts transitively as a four-cycle on four classes, the action of $U_{20}/\{\pm1\}$ is free and transitive, hence regular. Finally, surjectivity of $U_{20m}\to U_{20}$ shows that the action induced from $U_{20m}$ is the same regular action.

Finally, multiplication by a unit is bijective and preserves additive order. In Type~F, the map \(\Phi_u\) carries \(\langle m\rangle\) to itself, and therefore also preserves its complement. Hence every admissible parameter remains admissible.
\end{proof}

\begin{corollary}\label{cor:conjecture}
\Cref{thm:action} proves Conjecture~5.4 of \cite{EricksonJones2026}. Together with the seven-family classification, it decomposes all nontrivial five-point homometry classes into unit orbits.
\end{corollary}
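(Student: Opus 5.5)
The corollary is essentially a repackaging of \cref{thm:action}. The plan is to first recall precisely what Conjecture~5.4 of \cite{EricksonJones2026} asserts, and then check that the theorem's last sentence delivers exactly that. The conjecture says that for each Type $T\in\{\mathrm A,\ldots,\mathrm G\}$ and each $u\in U_N$, multiplication by $u$ sends every Type~$T$ homometry class in $\mathbb Z_N$ to a Type~$T$ homometry class, with the same $m$. By \cref{lem:homometry}, multiplication by $u$ is a well-defined map on homometry classes, and it depends only on $[u]\in\widetilde U_N$. So it remains to show that the image of a Type~$T$ class is again a Type~$T$ class for the same $m$.

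The main step is to read off the explicit formulas of \cref{thm:action} against the parameter spaces of \cref{prop:parameters}.
\begin{enumerate}[label=(\roman*)]
\item For Types~A--E, the image of the class with parameter $\alpha$, or $(\alpha,\beta)$ in Type~A, is the class with parameter $u\alpha$, or $(u\alpha,u\beta)$. Here $u$ acts through its reduction in $U_R$ for the relevant modulus $R\in\{m,3m,5m\}$. The admissibility conditions in \cref{prop:parameters} are nonzeroness, distinctness $\alpha\neq\beta$, and restrictions on additive order. Multiplication by a unit is a bijection of $Q_R$ that fixes $0$ and preserves additive order, so all of these conditions are preserved.
\item For Type~F, $\Phi_u$ is a bijection of $Q_{8m}$. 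It maps sign classes contained in $\langle m\rangle$ to such classes, since $u\langle m\rangle=\langle m\rangle$ and $4m\in\langle m\rangle$. Hence it preserves the complement, which is the Type~F parameter set.
\item For Type~G, the action permutes the four labels regularly.
\end{enumerate}
In every case the image is an admissible parameter of the same type, which gives the invariance.

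For the second sentence of the corollary, I would invoke the seven-family classification of \cite{EricksonJones2026}. It partitions all nontrivial five-point homometry classes in $\mathbb Z_N$ into the disjoint Types~A--G, as $m$ ranges over the values with $2m$, $5m$, $6m$, $8m$ or $20m$ equal to $N$. Invariance shows that each piece of this partition is a union of $\widetilde U_N$-orbits. The orbit decomposition of the whole set is therefore the disjoint union of the orbit decompositions of the individual types. On each type, the orbits are described by the explicit actions of \cref{thm:action} on the intrinsic parameter sets.

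The only point needing care is the bookkeeping when several types, or several values of $m$, occur for the same $N$. Since $N$ determines $m$ within each type and $u$ is fixed throughout, this causes no mixing. I expect no real obstacle; the substantive work is already contained in \cref{thm:action}.
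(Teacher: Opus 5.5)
Your proposal is correct and follows the paper's route. The corollary is read off from \cref{thm:action}, whose proof ends with exactly your admissibility check: units act bijectively and preserve additive order, and $\Phi_u$ carries $\langle m\rangle$ to itself and hence preserves its complement. Invariance of each type then makes the seven-family partition a union of unit orbits.
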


\section{Orbit counts and globally fixed classes}\label{sec:orbits}

The actions in \cref{thm:action} lead to four orbit-counting mechanisms. For Types~B--E, additive order completely determines the orbit. Type~A is a diagonal action on ordered pairs and is treated by fixed-point counting and Burnside's lemma, while Type~F is affine and is resolved by the orbit decomposition in \cref{thm:F-distribution}. Type~G is regular.

If a homometry class \(\mathcal H\) is fixed by every unit, meaning $u\mathcal H=\mathcal H$ for every $u\in U_N$, we call it \emph{globally fixed}. Notice that this condition concerns the homometry class as a whole; its constituent bracelets need not be fixed individually.

If a finite group $G$ acts on a finite set $X$, we use Burnside's lemma in the form
\[
\#(X/G)=\frac1{|G|}\sum_{g\in G}|\Fix_X(g)|,
\qquad
\Fix_X(g)=\{x\in X:g\cdot x=x\}.
\]

We begin with the order-theoretic fact used for Types~B--E.

\begin{lemma}\label{lem:orders}
For every \(q\mid N\), the group \(U_N\) is transitive on the elements of additive order \(q\) in \(\mathbb Z_N\), and \(\widetilde U_N\) is transitive on their sign classes. A nonzero order-\(q\) sign class is fixed by every unit exactly when
\[
q\in\{2,3,4,6\}.
\]
\end{lemma}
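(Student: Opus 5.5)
The proof has two parts: transitivity, and the characterization of globally fixed sign classes.

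\textbf{Transitivity.} Let $q\mid N$. The elements of additive order $q$ in $\mathbb Z_N$ are exactly the elements $(N/q)a$ with $a\in\mathbb Z_N$ and $\gcd(a,q)=1$. Any two of them, $(N/q)a$ and $(N/q)b$, differ by the unit $ba^{-1}\in U_q$. By the surjectivity of $U_N\to U_q$ noted after \cref{lem:homometry}, this unit lifts to some $u\in U_N$. Since multiplication of $(N/q)a$ depends only on $u\bmod q$, we get $u\cdot(N/q)a=(N/q)b$. Transitivity on sign classes then follows by passing to the quotient by $\pm1$.

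\textbf{Fixed classes.} The order-$q$ sign class $[x]_\pm$ with $x=N/q$ is fixed by every unit exactly when $ux\equiv\pm x\pmod N$ for all $u\in U_N$. This is equivalent to $u\equiv\pm1\pmod q$ for all $u\in U_N$. By the surjectivity of $U_N\to U_q$, that holds exactly when $U_q=\{\pm1\}$, i.e.\ when $\varphi(q)\le2$. Since every order-$q$ class is a unit multiple of $[x]_\pm$, and $U_N$ is abelian, the same criterion applies to all of them.

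It remains to solve $\varphi(q)\le2$ with $q\ge2$ (the class is nonzero). The solutions are $q\in\{2,3,4,6\}$, which is elementary. Write $q=\prod p^{e}$. Then $\varphi(q)\le2$ forces every odd prime divisor to be $3$ with exponent $1$, and the power of $2$ to be at most $4$; it also excludes the combination $4\cdot3$. The step most likely to need care is making sure the reduction from "fixed by every unit" to "$U_q=\{\pm1\}$" correctly uses the surjectivity in the direction needed.
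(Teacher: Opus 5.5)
Your proof is correct, and the transitivity part is the paper's argument: order-$q$ elements are $(N/q)a$ with $a\in U_q$, and units modulo $q$ lift to $U_N$. For the fixed-class criterion you compute the stabilizer directly ($u$ fixes an order-$q$ sign class iff $u\equiv\pm1\pmod q$, so by surjectivity every unit fixes it iff $U_q=\{\pm1\}$, i.e.\ $\varphi(q)\le2$), whereas the paper uses transitivity to see that the order-$q$ stratum is one orbit of size $\varphi(q)/2$ (or $1$ when $q=2$) and asks when that size is $1$; these are the two sides of orbit--stabilizer, and your version handles $q=2$ without a separate case.
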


\begin{proof}
Every order-\(q\) element of \(\mathbb Z_N\) has the form \((N/q)a\) with \(a\in U_q\). Since every unit modulo \(q\) lifts to one modulo \(N\), the unit group is transitive on the order-\(q\) elements, and hence also on their sign classes.

There are $\varphi(q)$ order-$q$ elements, where $\varphi$ denotes Euler's totient function. If \(q>2\), then \(x\ne-x\), so each sign class has two elements and the unique orbit of order-\(q\) sign classes has size \(\varphi(q)/2\). If \(q=2\), the unique order-two element \(N/2\) is self-negative, so there is a single sign class.

Thus an order-\(q\) sign class is fixed by every unit exactly when this orbit has size one: either \(q=2\), or \(\varphi(q)=2\). The latter occurs exactly for \(q\in\{3,4,6\}\).
\end{proof}

For a positive integer \(r\), write
\[
\delta_d(r)=\mathbf 1_{d\mid r},
\qquad
\tau(r)=\#\{d:d\mid r\},
\qquad
\psi(r)=r\prod_{p\mid r}\left(1+\frac1p\right),
\]
where \(\psi\) is the Dedekind psi function, and set
\[
P(r)=\sum_{d\mid r}\psi(d).
\]
We also use the multiplicative function \(\kappa\) determined by
\[
\kappa(1)=1,
\qquad
\kappa(p^a)=
\begin{cases}
4a,&p=2,\ a\ge1,\\
2a+1,&p\text{ odd}.
\end{cases}
\]

\begin{theorem}[Orbit counts and globally fixed classes]\label{thm:counts}
Let \(O_T(m)\) be the number of unit orbits and \(F_T(m)\) the number of globally fixed classes in Type~\(T\). Then
\[
\begin{array}{c|c|c}
\text{Type} & O_T(m) & F_T(m)\\
\hline
\mathrm A &
\displaystyle
\begin{aligned}
\frac{P(m)+\kappa(m)}2
&-\bigl(3+2\delta_2(m)+\delta_3(m)\bigr)\tau(m)\\
&+2+4\delta_2(m)+2\delta_3(m)+2\delta_6(m)
\end{aligned}
&
\displaystyle 2\delta_6(m)+3\delta_{12}(m)
\\[1.4em]
\mathrm B &
\tau(5m)-2-\delta_2(m)
&
\delta_2(m)+\delta_3(m)+\delta_4(m)+\delta_6(m)
\\
\mathrm C &
\tau(m)-1
&
\delta_2(m)+\delta_3(m)+\delta_4(m)+\delta_6(m)
\\
\mathrm D &
\tau(3m)-2-2\delta_2(m)
&
\delta_4(m)
\\
\mathrm E &
\tau(m)-1-\delta_2(m)
&
\delta_3(m)+\delta_4(m)+\delta_6(m)
\\
\mathrm F &
\tau(8m)-4
&
0
\\
\mathrm G & 1 & 0
\end{array}
\]
\end{theorem}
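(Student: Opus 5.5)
The plan is to treat each type through the explicit action of \cref{thm:action} and the parameter spaces of \cref{prop:parameters}.

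\textbf{Types B--E.} The action is $\alpha\mapsto u\alpha$ on nonzero sign classes of $Q_R$, with $R=5m$, $m$, $3m$, $m$ respectively. By \cref{lem:orders}, the orbits correspond to the admissible orders $q\mid R$ with $q>1$. The orbit count is therefore $\tau(R)-1$ minus the number of excluded orders that actually divide $R$.
\begin{itemize}
\item Type~B: orders $5$ and $10$ are excluded. Here $5$ always divides $5m$, and $10$ divides $5m$ iff $2\mid m$, giving $\tau(5m)-2-\delta_2(m)$.
\item Type~D: orders $2,3,6$ are excluded. Here $3\mid 3m$ always, while $2$ and $6$ divide $3m$ iff $2\mid m$, giving $\tau(3m)-2-2\delta_2(m)$.
\item Types~C and E: direct.
\end{itemize}
The fixed counts come from the second half of \cref{lem:orders}: count the orders in $\{2,3,4,6\}$ that divide $R$ and are admissible. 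For Type~B with $R=5m$, $q\in\{2,3,4,6\}$ divides $5m$ iff $q\mid m$. For Type~D, only order $4$ survives, and $4\mid 3m$ iff $4\mid m$.

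\textbf{Type G.} The action is regular on four points, so there is one orbit and no fixed point.

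\textbf{Type F.} First I show $F_F=0$. Take $u$ with $u\equiv 3\pmod 8$, lifted via surjectivity of $U_{8m}\to U_8$. I would then argue that $\Phi_u$ moves every parameter outside $\langle m\rangle$. If $[ui+4m]_\pm=[i]_\pm$, then either $(u-1)i\equiv -4m$ or $(u+1)i\equiv-4m \pmod{8m}$, and I would examine $2$-adic valuations. It may be cleaner to choose $u$ with $u\equiv 3\pmod 8$ and $u\equiv 1$ modulo the odd part and the higher $2$-power structure as needed, then check that the resulting congruence forces $i\in\langle m\rangle$.

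For $O_F$, I would split the orbits by the additive order of $i$ in $\mathbb Z_{8m}$, which $\Phi_u$ preserves only up to the shift by $4m$. Note that $\ord(ui+4m)=\ord(i)$ unless $v_2$ changes. The key claim is that the orbits are indexed by divisors $d$ of $8m$ not dividing $8$, i.e.\ not in $\{1,2,4,8\}$; this gives $\tau(8m)-4$. To prove it, I would show two things. First, within each order class the group $\widetilde U_{8m}$ is transitive, since the translation by $4m$ is compensated by a multiplier. Second, elements of distinct orders lie in distinct orbits: the $\langle m\rangle$ elements are exactly those of order dividing $8$, and the orders of elements with $v_2(i)\ne v_2(4m)$ are preserved. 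The delicate case is $v_2(i)=v_2(4m)$, where adding $4m$ changes the $2$-valuation. There I would pair the orders $d$ and $d'$ that are swapped and recount. This pairing is the main obstacle, and it is deferred to the decomposition in \cref{thm:F-distribution}, which I would prove first or in parallel.

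\textbf{Type A.} I would apply Burnside's lemma on $X$, the admissible ordered pairs in $Q_m$. First I count all ordered pairs $(\alpha,\beta)$ of nonzero sign classes: the orbits of the diagonal $\widetilde U_m$ action on $Q_m\times Q_m$ are computed by Burnside. The number of fixed points of $u$ on $Q_m$ is multiplicative in $m$, and summing over $u$ should produce $\sum_{d\mid m}\psi(d)$ plus a correction $\kappa(m)$ coming from the $\pm$ identification. This accounts for the $\tfrac{P(m)+\kappa(m)}{2}$ term. I would verify it prime-power by prime-power, counting the solutions of $ux\equiv\pm x$ and $uy\equiv\pm y$ simultaneously. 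Then I subtract the excluded strata by inclusion–exclusion:
\begin{itemize}
\item pairs with $\alpha=0$ or $\beta=0$, each contributing $\tau(m)$ orbits;
\item pairs with an order-$2$ coordinate, contributing $\delta_2\tau(m)$ each;
\item the diagonal $\alpha=\beta$, contributing $\tau(m)$;
\item $\ord\beta=3$ with $\ord\alpha\ne6$, contributing $\delta_3\tau(m)$ adjusted by the admissible $\ord\alpha=6$ orbit;
\end{itemize}
with overlaps giving the constant terms. The overlaps are the pairs $(0,0)$, $(0,\text{ord }2)$, diagonal meeting the zero/order-$2$ strata, and $\ord\beta=3$ meeting $\alpha=0$, $\ord\alpha=2$, or $\alpha=\beta$.

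For $F_A$, a pair is globally fixed iff both coordinates have orders in $\{2,3,4,6\}$ by \cref{lem:orders}. Admissibility rules out $0$, order $2$, and equality, and requires $\ord\beta=3\Rightarrow\ord\alpha=6$. Listing the allowed orders from $\{3,4,6\}$ gives the pairs $(6,3)$, $(3,4)$? I would enumerate them carefully: $(6,3)$ and $(4,6),(6,4),(4,3)$... and match the total to $2\delta_6+3\delta_{12}$.

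I expect the Type~A Burnside bookkeeping to be the hardest part, since the formula's constants must emerge exactly from this inclusion–exclusion.
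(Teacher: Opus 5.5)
\textbf{The gap is in Type F; both sub-arguments you sketch there would fail.}

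First, $F_F(m)=0$ cannot be proved with a single unit $u\equiv3\pmod 8$. By the fixedness computation behind \cref{prop:F-stabilizers}, a unit with $\varepsilon(u)=1$ fixes an order-$4r$ class iff $u\equiv 2r\pm1\pmod{4r}$, and fixes an order-$8r$ class iff $u\equiv 4r\pm1\pmod{8r}$. So your ``cleaner'' choice $u\equiv3\pmod 8$, $u\equiv1\pmod M$ fixes \emph{every} order-$4r$ class, and any choice with $u\equiv-1\pmod r$ fixes the order-$8r$ classes. For $m=3$ ($N=24$) one checks $\Phi_{11}([1]_\pm)=[23]_\pm=[1]_\pm$ and $\Phi_{19}([2]_\pm)=[50]_\pm=[2]_\pm$. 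In fact, whenever $3\mid m$ every unit fixes some admissible class, since units with $\varepsilon(u)=0$ fix the order-$3$ class. You must argue class by class, for instance from the orbit sizes: $\varphi(r)\ge2$ for odd $r>1$, and $2^{a-2}\varphi(r)\ge4$ for $a\ge4$.

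Second, the orbit structure you propose for $O_F$ is false, not merely delicate. Each of the following fails:
\begin{itemize}
\item $\widetilde U_{8m}$ is transitive on each order stratum;
\item distinct orders lie in distinct orbits;
\item orders are preserved whenever $v_2(i)\ne v_2(4m)$ (in fact the shift by $4m$ doubles every odd order).
\end{itemize}
What actually happens, for each odd $r>1$:
\begin{itemize}
\item the order-$r$ and order-$2r$ strata fuse into a single orbit;
\item the order-$8r$ stratum has $2\varphi(r)$ classes but orbits of size $\varphi(r)$, so it splits into two orbits, separated by the invariant $[c]_\pm\in U_8/\{\pm1\}$ where $ri=cm$.
\end{itemize}
For $m=3$ the four orbits of sign classes in $\mathbb Z_{24}$ are $\{[8],[4]\}$, $\{[2],[10]\}$, $\{[1],[7]\}$, $\{[5],[11]\}$. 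The total $\tau(8m)-4$ comes out right only because each fusion is cancelled by exactly one split. Your model together with the $r\leftrightarrow 2r$ pairing would instead give $\tau(8m)-4-(\tau(M)-1)$.

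The missing ingredient is the stabilizer count of \cref{prop:F-stabilizers}, compared against the stratum sizes $\varphi(q)/2$; this is how \cref{thm:F-distribution} is proved. Once you invoke that theorem, both $O_F(m)=\tau(8m)-4$ and $F_F(m)=0$ follow at once, which is exactly the paper's argument.

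\textbf{Types B--E and G} follow the paper. For C--E you should first note that $U_{6m}\to U_m$ and $U_{6m}\to U_{3m}$ are surjective before invoking \cref{lem:orders}.

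\textbf{Type A} is a genuinely different and valid route.
\begin{itemize}
\item The paper applies Burnside directly to admissible pairs, using $g_m(u)=(A_u+B_u)/2-c_m$ and the fixed-pair count $(g_m(u)-\delta_3(m))(g_m(u)-1)+\delta_6(m)$.
\item You apply Burnside to all of $Q_m\times Q_m$. There $u$ fixes $(A_u+B_u)/2$ sign classes, so averaging $(A_u^2+2A_uB_u+B_u^2)/4$ gives $(P(m)+\kappa(m))/2$.
\item You then subtract the invariant degenerate strata. Each has $\tau(m)$ orbits or is a single point, because $0$, $m/2$, and the order-$3$ and order-$6$ classes are globally fixed.
\end{itemize}
Abbreviating $\tau=\tau(m)$, $\delta_d=\delta_d(m)$ and $c_m=1+\delta_2$, this yields
\[
\frac{P(m)+\kappa(m)}2-2c_m\tau+c_m^2-(\tau-c_m)-\delta_3\bigl(\tau-c_m-1-\delta_6\bigr),
\]
which simplifies to the paper's formula. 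Your version exposes the constants as orbit counts of degenerate strata; the paper's packs them into one polynomial in $g_m(u)$. Both rest on the same three moments.

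Two slips in the Type A part:
\begin{itemize}
\item The fixed-point count $(A_u+B_u)/2$ is not multiplicative in $m$: for $m=15$, $u=4$ it equals $4$, while the local values are $2$ and $3$. Only the averages of $A_u^2$, $B_u^2$ and $A_uB_u$ are multiplicative.
\item In $F_A$, the pair $(4,3)$ is inadmissible. The correct list is $(3,4),(3,6),(4,6),(6,3),(6,4)$, which gives $2\delta_6(m)+3\delta_{12}(m)$.
\end{itemize}
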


\begin{proof}
For Types~B--E, the acting unit groups induce the full unit actions on the intrinsic parameter groups: for Types~C and~E use the surjections \(U_{6m}\to U_m\), and for Type~D use \(U_{6m}\to U_{3m}\). Hence \cref{lem:orders} gives one unit orbit for each admissible additive order. The relevant bookkeeping from \cref{prop:parameters} is
\[
\begin{array}{c|c|c|c}
\text{Type}
& \text{parameter group}
& \text{removed order strata}
& \text{globally fixed admissible orders}
\\
\hline
\mathrm B
& \mathbb Z_{5m}
& 1,5,\ \text{and }10\text{ if }2\mid m
& 2,3,4,6
\\
\mathrm C
& \mathbb Z_m
& 1
& 2,3,4,6
\\
\mathrm D
& \mathbb Z_{3m}
& 1,3,\ \text{and }2,6\text{ if }2\mid m
& 4
\\
\mathrm E
& \mathbb Z_m
& 1,\ \text{and }2\text{ if }2\mid m
& 3,4,6
\end{array}
\]
Since the possible additive orders in \(\mathbb Z_R\) are precisely the divisors of \(R\), counting the remaining strata gives the Type~B--E orbit formulas. The last column and \cref{lem:orders} give their globally fixed counts.

The Type~A calculation is proved in \cref{app:A}. The Type~F formulas also follow directly from the orbit decomposition in \cref{thm:F-distribution}. For Type~G, the action in \cref{thm:action} is regular on four classes, so there is one orbit and no globally fixed class.
\end{proof}

\section{Stabilizers and orbit-size distributions}\label{sec:stabilizers}

The formulas in \cref{thm:counts} count unit orbits but do not describe their individual sizes. For Types~B--E and~G the orbit-size distributions follow directly from the preceding results, while Types~A and~F require a finer stabilizer analysis.

\subsection{Types B--E and G}

In this subsection, stabilizers are taken in the full unit group $U_N$ of the ambient bracelet modulus $N$. For Type~A below, we instead use the induced $U_m$-action, as stated explicitly there.

For Types~B--E, each admissible additive order \(q\) forms a single unit orbit by \cref{lem:orders}. Hence an orbit of order \(q\) has size
\[
\begin{cases}
1,&q=2,\\[0.4ex]
\varphi(q)/2,&q>2.
\end{cases}
\]
Equivalently, for an admissible parameter \(\alpha\) of additive order \(q\),
\[
|\operatorname{Stab}_{U_N}(\alpha)|
=
\begin{cases}
\varphi(N),&q=2,\\[0.6ex]
2\varphi(N)/\varphi(q),&q>2.
\end{cases}
\]
The admissible orders are those listed in \cref{prop:parameters}.

For Type~G, \cref{thm:action} gives one regular orbit of size \(4\); hence every Type~G class has stabilizer of size
\[
\frac{\varphi(20m)}4.
\]

\subsection{Type A: orders and relative phase}

Let \((\alpha,\beta)\) be an admissible Type~A parameter pair and put
\[
q=\ord(\alpha),
\qquad
r=\ord(\beta),
\qquad
g=\gcd(q,r),
\qquad
L=\operatorname{lcm}(q,r).
\]
Thus \(q,r\mid m\) and \(q,r>2\). Choose representatives
\[
\alpha=\left[\frac mq a\right]_\pm,\qquad a\in U_q,
\qquad
\beta=\left[\frac mr b\right]_\pm,\qquad b\in U_r.
\]
Reduction modulo \(g\) permits us to define the relative phase
\[
\lambda(\alpha,\beta)
=
[ab^{-1}]_\pm\in U_g/\{\pm1\}.
\]
Changing either representative by its negative changes \(ab^{-1}\) only by sign, so \(\lambda\) is well defined.

\begin{proposition}[Type A orbit classification]\label{prop:A-stabilizers}
Two admissible Type~A pairs lie in the same unit orbit if and only if they have the same ordered pair of additive orders \((q,r)\) and the same relative phase \(\lambda\).

For a pair of orders \(q,r\), the stabilizer in the induced \(U_m\)-action is
\[
H_A(q,r)
=
\left\{
u\in U_m:
u\equiv\pm1\pmod q,\quad
u\equiv\pm1\pmod r
\right\}.
\]
Consequently every orbit with order pair \((q,r)\) has size
\[
|\operatorname{Orb}(\alpha,\beta)|
=
\begin{cases}
\dfrac{\varphi(L)}4,&g\le2,\\[1.2ex]
\dfrac{\varphi(L)}2,&g>2.
\end{cases}
\]
For an admissible ordered order-pair, the number of such orbits is
\[
\nu_A(q,r)=
\begin{cases}
1,&q\ne r,\ g\le2,\\[0.6ex]
\dfrac{\varphi(g)}2,&q\ne r,\ g>2,\\[1.2ex]
\dfrac{\varphi(q)}2-1,&q=r.
\end{cases}
\]
As in \cref{prop:parameters}, the condition \(r=3\) forces \(q=6\).
\end{proposition}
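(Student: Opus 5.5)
By \cref{thm:action}, the Type~A action is $(\alpha,\beta)\mapsto(u\alpha,u\beta)$ on pairs in $Q_m$. Since $U_{2m}\to U_m$ is surjective, I will work with the induced $U_m$-action throughout.

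\textbf{Orbit invariants.} The ordered pair of orders $(q,r)$ is clearly invariant. For the phase, write $\alpha=[\tfrac mq a]_\pm$ and $\beta=[\tfrac mr b]_\pm$. Multiplying by $u$ replaces $(a,b)$ by $(ua,ub)$, so $ab^{-1}$ is unchanged modulo $g$, and $\lambda$ is invariant.

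\textbf{Transitivity.} Suppose $(\alpha,\beta)$ and $(\alpha',\beta')$ have the same $(q,r)$ and the same $\lambda$. Choose signs of the representatives so that $a'b'^{-1}\equiv ab^{-1}\pmod g$. We seek $u\in U_m$ with $ua\equiv\pm a'\pmod q$ and $ub\equiv\pm b'\pmod r$, using the same sign in both, or independent signs since sign classes allow this. Taking the same sign $+$, we need $u\equiv a'a^{-1}\pmod q$ and $u\equiv b'b^{-1}\pmod r$. These congruences are compatible modulo $g$ exactly because $a'a^{-1}\equiv b'b^{-1}\pmod g$, which is the phase condition. By CRT for non-coprime moduli they give a unit modulo $L$, which lifts to $U_m$ by surjectivity.

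\textbf{Stabilizer.} The element $u$ fixes $(\alpha,\beta)$ iff $ua\equiv\pm a\pmod q$ and $ub\equiv\pm b\pmod r$, that is, $u\equiv\pm1\pmod q$ and $u\equiv\pm1\pmod r$ with independent signs. This is $H_A(q,r)$.

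\textbf{Orbit size.} The orbit size is $[U_m:H_A]=[U_L:\bar H]$, where $\bar H\le U_L$ is the image of $H_A$, because $H_A$ is the full preimage of a subgroup of $U_L$. The subgroup $\bar H$ consists of the CRT solutions of $u\equiv\epsilon_1\pmod q$, $u\equiv\epsilon_2\pmod r$ with $\epsilon_1,\epsilon_2\in\{\pm1\}$. A sign pair is compatible iff $\epsilon_1\equiv\epsilon_2\pmod g$. Since $q,r>2$, the four sign pairs give distinct elements. If $g\le2$, all four pairs are compatible, so $|\bar H|=4$. If $g>2$, only $\epsilon_1=\epsilon_2$ works, so $|\bar H|=2$. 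This yields $\varphi(L)/4$ and $\varphi(L)/2$ respectively.

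\textbf{Counting orbits.} For $q\ne r$, $\lambda$ is surjective onto $U_g/\{\pm1\}$: given $c\in U_g$, lift it to $a\in U_q$ and take $b=1$. So there are $|U_g/\{\pm1\}|$ orbits. This equals $1$ if $g\le2$ and $\varphi(g)/2$ otherwise.

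For $q=r$ we have $g=q$, and the admissibility condition $\alpha\neq\beta$ removes exactly the phase $\lambda=[1]_\pm$. Indeed, $\alpha=\beta$ iff $a\equiv\pm b\pmod q$. This leaves $\varphi(q)/2-1$ orbits. The remaining constraint, that $r=3$ forces $q=6$, is just inherited from \cref{prop:parameters} and restricts which $(q,r)$ occur.

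\textbf{Main obstacle.} The delicate point is the transitivity step. The signs must be handled so that the phase condition gives exactly CRT compatibility modulo $g$. A second delicate point is the stabilizer computation, specifically the distinctness of the four sign combinations. For $q,r>2$, $1\not\equiv-1$ modulo $q$ and modulo $r$, so distinct sign pairs give distinct residues modulo $L$.
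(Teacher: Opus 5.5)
Your proposal is correct and follows essentially the same route as the paper. Both arguments get the stabilizer from $u\equiv\pm1$ modulo each order, count the compatible sign pairs modulo $L$, lift through $U_m\to U_L$, and prove transitivity by CRT compatibility modulo $g$ after one sign adjustment. Your explicit lift showing that $\lambda$ is surjective onto $U_g/\{\pm1\}$ fills in a step the paper only asserts.
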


\begin{proof}
A unit \(u\in U_m\) fixes a sign class of additive order \(q\) exactly when
\[
u\equiv\pm1\pmod q.
\]
Hence the displayed formula for \(H_A(q,r)\) follows immediately.

The four possible sign choices in the two congruences are simultaneously compatible exactly when the chosen signs agree modulo \(g\). The two equal-sign choices are always compatible, whereas the two opposite-sign choices are compatible exactly when \(g\mid2\). Since \(U_m\to U_L\) is surjective,
\[
|H_A(q,r)|
=
\frac{\varphi(m)}{\varphi(L)}
\begin{cases}
4,&g\le2,\\
2,&g>2.
\end{cases}
\]
The orbit-stabilizer theorem gives the stated orbit sizes.

The relative phase is invariant under the diagonal action, since multiplication of both \(a\) and \(b\) by the same unit cancels in \(ab^{-1}\). Conversely, suppose two pairs with the same orders are represented by \(a,b\) and \(a',b'\), and have the same relative phase. Then
\[
a'a^{-1}\equiv\pm b'b^{-1}\pmod g.
\]
After changing one target representative by sign if necessary, the congruences
\[
u\equiv a'a^{-1}\pmod q,
\qquad
u\equiv b'b^{-1}\pmod r
\]
are compatible. They determine a unit residue modulo \(L\), which lifts to a unit modulo \(m\). This unit carries the first parameter pair to the second.

Thus, when \(q\ne r\), the orbits with fixed \((q,r)\) are indexed by \(U_g/\{\pm1\}\), which has one element for \(g\le2\) and \(\varphi(g)/2\) elements for \(g>2\). When \(q=r\), the identity relative phase corresponds precisely to \(\alpha=\beta\), which is excluded, leaving \(\varphi(q)/2-1\) orbits.
\end{proof}

Thus the Type~A action has a simple interpretation: the additive orders of the two parameters and their relative phase on the common divisor \(\gcd(q,r)\) form a complete set of orbit invariants.

\subsection{Type F: affine orbit decomposition}

Let
\[
N=8m=2^sM,
\qquad
M\text{ odd},
\qquad
s\ge3,
\]
and let \([i]_\pm\) be an allowed Type~F parameter. Write
\[
q=\ord(i)=2^a r,
\qquad
r\text{ odd}.
\]

\begin{proposition}[Type F stabilizers]\label{prop:F-stabilizers}
The stabilizer of \([i]_\pm\) in \(U_N\) consists precisely of the units satisfying one of
\[
\begin{aligned}
&\varepsilon(u)=0,
&&u\equiv\pm1\pmod q,\\
&\varepsilon(u)=1,\quad q\text{ even},
&&u\equiv q/2\pm1\pmod q.
\end{aligned}
\]
Write this stabilizer as \(H_F(q)\). If \(L=\operatorname{lcm}(8,q)\), then
\[
|H_F(q)|
=
\frac{\varphi(N)}{\varphi(L)}
\begin{cases}
4,&0\le a\le3,\\
2,&a\ge4.
\end{cases}
\]
Consequently
\[
|\operatorname{Orb}([i]_\pm)|
=
\begin{cases}
\varphi(r),&0\le a\le3,\\[0.6ex]
2^{a-2}\varphi(r),&a\ge4.
\end{cases}
\]
\end{proposition}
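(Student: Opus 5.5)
The plan is to unwind the fixed-point condition for $\Phi_u$ directly, and then count stabilizer residues with the Chinese remainder theorem modulo $L=\operatorname{lcm}(8,q)$.

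\textbf{Characterizing the stabilizer.} By \cref{thm:action}, $u$ stabilizes $[i]_\pm$ exactly when
\[
ui+4m\varepsilon(u)\equiv \pm i\pmod{8m}
\]
for some sign, i.e.\ $(u\mp1)i\equiv 4m\varepsilon(u)$.

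Write $i=(N/q)c$ with $c\in U_q$, as in the proof of \cref{lem:orders}.

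If $\varepsilon(u)=0$, the condition is $(u\mp1)c\equiv0\pmod q$, i.e.\ $u\equiv\pm1\pmod q$.

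If $\varepsilon(u)=1$, the right side $4m=N/2$ has additive order $2$. The left side lies in the subgroup of order $q$, so $q$ must be even. The condition then reads $(u\mp1)c\equiv q/2\pmod q$. Since $c$ is odd when $q$ is even, $c^{-1}(q/2)\equiv q/2$, and this becomes $u\equiv q/2\pm1\pmod q$. This gives the displayed description of $H_F(q)$.

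\textbf{Reducing the count modulo $L$.} Membership depends only on $u\bmod 8$ and $u\bmod q$, hence only on $u\bmod L$. Since $U_N\to U_L$ is surjective with fibres of size $\varphi(N)/\varphi(L)$, it suffices to count admissible residues in $U_L$.

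The key input from \cref{prop:parameters} is that $[i]_\pm\not\subseteq\langle m\rangle$ means exactly $q\nmid 8$. Hence either $r>1$ or $a\ge4$.

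\textbf{Case $a\ge4$.} Every residue $u\equiv\pm1$ or $2^{a-1}\pm1\pmod{2^a}$ is $\equiv\pm1\pmod 8$. So the $\varepsilon=1$ branch is empty, and the stabilizer residues are just $u\equiv\pm1\pmod L$, which are two residues. This gives $|\operatorname{Orb}|=\varphi(L)/2=2^{a-2}\varphi(r)$.

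\textbf{Case $a\le3$.} Here $L=8r$ with $r\ge3$. Fix a sign $s=\pm1$; the sign determines $u\equiv s\pmod r$, since $q/2\equiv0\pmod r$ when $q$ is even. I then count the choices of $u\bmod 8$ separately for each $a$:
\begin{itemize}[leftmargin=2em]
\item $a=0$: only the $\varepsilon=0$ branch applies, and $u\bmod 8\in\{\pm1\}$ is free, giving $2$ choices.
\item $a=1$: the $\varepsilon=1$ target $r\pm1$ is even, so no unit satisfies it. The $\varepsilon=0$ branch again gives $2$ choices.
\item $a=2$: the $\varepsilon=0$ branch requires $u\equiv s\pmod4$ with $u\equiv\pm1\pmod 8$, giving $1$ choice. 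The $\varepsilon=1$ branch requires $u\equiv -s\pmod 4$ with $u\equiv\pm3\pmod8$, giving $1$ choice.
\item $a=3$: the choices are $u\equiv s\pmod 8$ and $u\equiv 4+s\pmod8$, and the latter is automatically $\equiv\pm3$.
\end{itemize}
In every subcase there are $2$ residues per sign, so $4$ in total. Hence $|H_F(q)|=4\varphi(N)/\varphi(L)$ and $|\operatorname{Orb}|=\varphi(8r)/4=\varphi(r)$ by orbit--stabilizer.

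I expect the main obstacle to be this small-$a$ bookkeeping. It requires checking that the sign choices modulo $r$ and modulo $2^a$ are correctly coupled (they are one congruence modulo $q$), that no residue is double-counted between the two branches, and that the degenerate $a=1$ branch contributes nothing. I would verify each subcase directly on residues modulo $8$, as sketched above.
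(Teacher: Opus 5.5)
Your proof is correct and follows the paper's argument: you split the fixed-point equation $ui+4m\varepsilon(u)\equiv\pm i$ by $\varepsilon(u)$, identify $N/2$ as the unique order-two element of $\langle i\rangle$, count residues modulo $L=\operatorname{lcm}(8,q)$ in the cases $a\le3$ and $a\ge4$, and finish with surjectivity of $U_N\to U_L$ and orbit--stabilizer. You also make the use of admissibility explicit ($q\nmid 8$, so $r>1$ whenever $a\le3$); the paper's count of four classes for $a\in\{0,1\}$ tacitly needs this, since it would drop to two if $q\in\{1,2\}$.
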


\begin{proof}
If \(\varepsilon(u)=0\), fixedness means
\[
[ui]_\pm=[i]_\pm,
\]
which is equivalent to \(u\equiv\pm1\pmod q\).

Suppose \(\varepsilon(u)=1\). Then fixedness is equivalent to
\[
(u-1)i=N/2
\qquad\text{or}\qquad
(u+1)i=N/2.
\]
If \(q\) is odd, the subgroup generated by \(i\) contains no element of order \(2\), so neither congruence is possible. If \(q\) is even, then the unique order-two element of \(\langle i\rangle\) is
\[
(q/2)i=N/2,
\]
and the two conditions become
\[
u\equiv q/2+1\pmod q
\qquad\text{or}\qquad
u\equiv q/2-1\pmod q.
\]

It remains to count the compatible residue classes modulo \(L=\operatorname{lcm}(8,q)\). For \(a=0\) or \(1\), the \(\varepsilon=0\) case gives four classes and the \(\varepsilon=1\) case gives none. For \(a=2\) or \(3\), each case gives two classes. For \(a\ge4\), only the two \(\varepsilon=0\) classes remain, because \(q/2\pm1\equiv\pm1\pmod8\), whereas \(\varepsilon(u)=1\) requires \(u\equiv\pm3\pmod8\). Surjectivity of \(U_N\to U_L\) gives the stabilizer formula, and orbit-stabilizer gives the asserted orbit sizes.
\end{proof}

The affine term can change additive order only between the odd and twice-odd strata. More precisely,
\[
\ord(i+N/2)=
\begin{cases}
2q,&q\text{ odd},\\
q/2,&q\equiv2\pmod4,\\
q,&4\mid q.
\end{cases}
\]
Indeed, if \(q\) is odd, \(i\) and \(N/2\) have coprime orders \(q\) and \(2\). If \(q\) is even, then \(N/2=(q/2)i\), so
\[
i+N/2=(1+q/2)i,
\]
and the assertion follows from \(\ord(ki)=q/\gcd(q,k)\).

\begin{theorem}[Type F orbit-size distribution]\label{thm:F-distribution}
For every odd divisor \(r>1\) of \(M\):
\begin{enumerate}[label=\textup{(\roman*)}]
\item The sign classes of orders \(r\) and \(2r\), taken together, form one orbit of size \(\varphi(r)\).
\item The sign classes of order \(4r\) form one orbit of size \(\varphi(r)\).
\item The sign classes of order \(8r\) form exactly two orbits, each of size \(\varphi(r)\).
\end{enumerate}
Moreover, for every odd divisor \(r\mid M\) and every \(4\le a\le s\), the sign classes of order \(2^a r\) form one orbit of size
\[
2^{a-2}\varphi(r).
\]

For order \(8r\), the two orbits are distinguished as follows. Multiplication by \(r\) removes the odd-order component of \(i\), leaving an element of order \(8\). Thus there is a unique \(c\in U_8\) such that
\[
ri=cm.
\]
Replacing \(i\) by \(-i\) replaces \(c\) by \(-c\), so the sign class
\[
[c]_\pm\in U_8/\{\pm1\}
\]
is well defined. It is invariant under the Type~F action, and its two possible values \([\pm1]\) and \([\pm3]\) distinguish the two orbits.
\end{theorem}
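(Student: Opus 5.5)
The plan is to combine the stabilizer computation of \cref{prop:F-stabilizers}, which already gives every orbit size, with the order-change formula for \(i\mapsto i+N/2\) and a count of the sign classes in each order stratum. The number of orbits inside a union of strata will then be the number of sign classes there divided by the common orbit size.

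\textbf{Invariant sets.} By \cref{thm:action}, \(\Phi_u\) is the map \(i\mapsto ui\), possibly followed by adding \(N/2\). Multiplication by a unit preserves additive order. Adding \(N/2\) preserves order when \(4\mid q\) and swaps the strata \(r\) and \(2r\) when \(r\) is odd. Hence the following sets are \(\Phi\)-invariant: the union of orders \(r\) and \(2r\), and each single stratum of order \(2^a r\) with \(a\ge2\). The excluded classes lie in \(\langle m\rangle\), which consists of the elements whose order divides \(8\). So for \(r>1\) none of these classes is excluded, while for \(r=1\) only the strata with \(a\ge4\) survive.

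\textbf{Counting classes.} The order-\(q\) elements of \(\mathbb Z_N\) number \(\varphi(q)\), giving \(\varphi(q)/2\) sign classes when \(q>2\). Summing over the strata:
\begin{itemize}
\item orders \(r\) and \(2r\) together give \(\varphi(r)/2+\varphi(r)/2=\varphi(r)\) classes; the orbit size is \(\varphi(r)\) (the case \(a\le3\)), so there is one orbit;
\item order \(4r\) gives \(\varphi(4r)/2=\varphi(r)\) classes, hence one orbit;
\item order \(8r\) gives \(2\varphi(r)\) classes, hence two orbits;
\item order \(2^a r\) with \(a\ge4\) gives \(2^{a-2}\varphi(r)\) classes, which equals the orbit size \(2^{a-2}\varphi(r)\), hence one orbit. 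This also covers \(r=1\).
\end{itemize}
For \(a=1\) with \(r>1\), the number of sign classes is \(\varphi(2r)/2\), and \(q=2r\) is indeed greater than \(2\).

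\textbf{The invariant \([c]_\pm\) at order \(8r\).} Write \(ri\), which has order exactly \(8\), as \(cm\) with \(c\in U_8\). Negating \(i\) negates \(c\), so \([c]_\pm\) is well defined. Now apply \(\Phi_u\).
\begin{itemize}
\item If \(\varepsilon(u)=0\), then \(r(ui)=(u\bmod 8)\,cm\) with \(u\equiv\pm1\pmod 8\), so \([c]_\pm\) is unchanged.
\item If \(\varepsilon(u)=1\), then \(r(ui+4m)=ucm+4rm\). Since \(r\) is odd, \(4rm\equiv 4m\pmod{8m}\), so the new value is \((uc+4)m\). With \(u\equiv\pm3\) we get \(uc\equiv\pm3c\), and \(\pm3c+4\equiv\pm c\pmod 8\) because \(3c+4\equiv -c\) when \(c\) is odd (\(4c\equiv4\)). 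So \([c]_\pm\) is again preserved.
\end{itemize}
Both values occur: take \(i\) equal to an order-\(r\) element plus \(cm\) for \(c=1\) or \(c=3\). Since there are exactly two orbits, the two values of \([c]_\pm\) separate them.

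The main obstacle is this last affine computation. It requires checking carefully that \(r\cdot 4m\equiv 4m\) and that \(\pm3c+4\equiv\mp c\pmod8\) for odd \(c\). The remaining steps are bookkeeping. Finally, the case split \(a\le3\) versus \(a\ge4\) must match \cref{prop:F-stabilizers}, in particular \(a=0,1\) for the merged stratum.
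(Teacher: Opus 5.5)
Your proof is correct and follows essentially the same route as the paper: you show the merged $r$/$2r$ union and each single stratum of order $2^a r$ ($a\ge2$) are invariant, compare the $\varphi(q)/2$ sign classes per stratum with the orbit sizes from \cref{prop:F-stabilizers}, and prove invariance of $[c]_\pm$ via $c\mapsto uc+4\varepsilon(u)\pmod 8$. One small slip, in the step showing both values of $[c]_\pm$ occur (which the paper leaves implicit): for $i=x+cm$ with $x$ of order $r$ you get $ri=rcm$, so the invariant is $[rc]_\pm$ rather than $[c]_\pm$, but since $r$ is odd the choices $c=1,3$ still produce both classes.
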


\begin{proof}
For \(r>1\) odd, the total number of sign classes of orders \(r\) and \(2r\) is
\[
\frac{\varphi(r)}2+\frac{\varphi(2r)}2=\varphi(r).
\]
The affine part interchanges the two order strata, while \cref{prop:F-stabilizers} gives orbit size \(\varphi(r)\); hence they form a single orbit.

The number of order-\(4r\) sign classes is
\[
\frac{\varphi(4r)}2=\varphi(r),
\]
again equal to the orbit size, so this stratum is one orbit.

For order \(8r\), there are
\[
\frac{\varphi(8r)}2=2\varphi(r)
\]
sign classes, while each orbit has size \(\varphi(r)\), so there are exactly two orbits. To distinguish them, write \(ri=cm\) as above. Under the affine action,
\[
c\longmapsto uc+4\varepsilon(u)\pmod8.
\]
If \(\varepsilon(u)=0\), this is \(\pm c\). If \(\varepsilon(u)=1\), then \(u\equiv\pm3\pmod8\), and, since \(c\) is odd, \(uc+4\equiv\pm c\pmod8\). Thus \([c]_\pm\) is invariant, and the two sign classes in \(U_8/\{\pm1\}\) give the two orbits.

Finally, for \(a\ge4\), the number of sign classes of order \(2^a r\) is
\[
\frac{\varphi(2^a r)}2
=
2^{a-2}\varphi(r),
\]
which equals the orbit size in \cref{prop:F-stabilizers}. Hence the entire order stratum is one orbit.
\end{proof}

As a consequence, if \(D=\tau(M)\), then
\[
\begin{aligned}
O_F(m)
&=(D-1)+(D-1)+2(D-1)+(s-3)D\\
&=(s+1)D-4\\
&=\tau(N)-4\\
&=\tau(8m)-4.
\end{aligned}
\]
This recovers the Type~F orbit formula in \cref{thm:counts} directly from the orbit decomposition. It also gives \(F_F(m)=0\), since none of the allowed Type~F orbits has size one.

\subsection{Complementary classes}

Complementation preserves homometry for subsets of equal cardinality. Indeed, if $C_S(t)=|S\cap(S+t)|$, then $C_{S^c}(t)=N-2|S|+C_S(t)$. Moreover $T_c(S^c)=T_c(S)^c,\,R_c(S^c)=R_c(S)^c,\,u(S^c)=(uS)^c$. Thus complementation is equivariant for both the dihedral and unit actions. Consequently the unit orbits and stabilizers of the five-point families transport unchanged to their complementary $(N-5)$-point homometry classes.

\section{Concluding remarks}

The results above prove the unit-invariance conjecture for all seven families and determine their unit-orbit counts, globally fixed classes, and orbit-size distributions. In Type~A, the ordered additive orders together with a relative phase on their common divisor form a complete set of orbit invariants. In Type~F, the affine action has a rigid order-stratum decomposition: the odd and twice-odd strata pair, the order-\(8r\) strata split into two orbits, and all higher \(2\)-power strata are transitive.

Complementation transports these actions equivariantly to complementary homometry classes. It remains open whether homometry classes of larger cardinality not obtained by complementation admit comparably explicit arithmetic parameterizations and unit actions.

\appendix

\section{Dihedral verification of the parameter identities}
\label{app:param-identities}

Recall that $T_c(S)=S+c$ and $R_c(S)=c-S$ with all coordinates taken modulo \(N\). The maps \(T_c\) and \(R_c\) are respectively a translation and a reflection, so \([S]=[T_c(S)]=[R_c(S)]\). For each family, the parenthesized superscripts \( (1),(2) \), and, for Type~E, \( (3) \), are labels for the bracelets in the order in which they are displayed in \cref{sec:families}. Since a homometry class is an unordered set of bracelets, the following identities prove all of the parameter identifications used in \cref{prop:parameters}.

\smallskip
\noindent\textbf{Type A.}
In \(\mathbb Z_{2m}\),
\begin{align*}
T_m\!\left(A^{(1)}_{i+m,j}\right)&=A^{(1)}_{i,j},&
T_m\!\left(A^{(2)}_{i+m,j}\right)&=A^{(2)}_{i,j},\\
T_m\!\left(A^{(1)}_{i,j+m}\right)&=A^{(2)}_{i,j},&
T_m\!\left(A^{(2)}_{i,j+m}\right)&=A^{(1)}_{i,j},\\
R_m\!\left(A^{(1)}_{-i,j}\right)&=A^{(2)}_{i,j},&
R_m\!\left(A^{(2)}_{-i,j}\right)&=A^{(1)}_{i,j},\\
T_m\!\left(A^{(1)}_{i,-j}\right)&=A^{(2)}_{i,j},&
T_m\!\left(A^{(2)}_{i,-j}\right)&=A^{(1)}_{i,j}.
\end{align*}
Hence
\begin{align*}
\Acal_m(i+m,j)&=\Acal_m(i,j),&
\Acal_m(i,j+m)&=\Acal_m(i,j),\\
\Acal_m(-i,j)&=\Acal_m(i,j),&
\Acal_m(i,-j)&=\Acal_m(i,j).
\end{align*}

\smallskip
\noindent\textbf{Type B.}
In \(\mathbb Z_{5m}\),
\[
R_{2m}\!\left(B^{(1)}_{-i}\right)=B^{(1)}_i,
\qquad
R_m\!\left(B^{(2)}_{-i}\right)=B^{(2)}_i.
\]
Also \(B^{(k)}_{i+5m}=B^{(k)}_i\) for \(k=1,2\), directly modulo \(5m\).
Therefore
\[
\Bcal_m(-i)=\Bcal_m(i),
\qquad
\Bcal_m(i+5m)=\Bcal_m(i).
\]

\smallskip
\noindent\textbf{Type C.}
In \(\mathbb Z_{6m}\),
\begin{align*}
R_{4m}\!\left(C^{(1)}_{-i}\right)&=C^{(2)}_i,&
R_{4m}\!\left(C^{(2)}_{-i}\right)&=C^{(1)}_i,\\
C^{(1)}_{i+m}&=C^{(2)}_i,&
T_{4m}\!\left(C^{(2)}_{i+m}\right)&=C^{(1)}_i.
\end{align*}
Consequently
\[
\Ccal_m(-i)=\Ccal_m(i),
\qquad
\Ccal_m(i+m)=\Ccal_m(i).
\]

\smallskip
\noindent\textbf{Type D.}
In \(\mathbb Z_{6m}\),
\begin{align*}
R_{2m}\!\left(D^{(1)}_{-i}\right)&=D^{(2)}_i,&
R_{2m}\!\left(D^{(2)}_{-i}\right)&=D^{(1)}_i,\\
T_{3m}\!\left(D^{(1)}_{i+3m}\right)&=D^{(2)}_i,&
T_{3m}\!\left(D^{(2)}_{i+3m}\right)&=D^{(1)}_i.
\end{align*}
Thus
\[
\Dcal_m(-i)=\Dcal_m(i),
\qquad
\Dcal_m(i+3m)=\Dcal_m(i).
\]

\smallskip
\noindent\textbf{Type E.}
In \(\mathbb Z_{6m}\),
\begin{align*}
T_i\!\left(E^{(1)}_{-i}\right)&=E^{(2)}_i,&
T_i\!\left(E^{(2)}_{-i}\right)&=E^{(1)}_i,&
R_{2m}\!\left(E^{(3)}_{-i}\right)&=E^{(3)}_i,\\
R_{3m+i}\!\left(E^{(1)}_{i+m}\right)&=E^{(2)}_i,&
R_{2m}\!\left(E^{(2)}_{i+m}\right)&=E^{(3)}_i,&
T_i\!\left(E^{(3)}_{i+m}\right)&=E^{(1)}_i.
\end{align*}
Hence
\[
\Ecal_m(-i)=\Ecal_m(i),
\qquad
\Ecal_m(i+m)=\Ecal_m(i).
\]

\smallskip
\noindent\textbf{Type F.}
In \(\mathbb Z_{8m}\),
\[
T_i\!\left(F^{(1)}_{-i}\right)=F^{(2)}_i,
\qquad
T_i\!\left(F^{(2)}_{-i}\right)=F^{(1)}_i.
\]
Also \(F^{(k)}_{i+8m}=F^{(k)}_i\) for \(k=1,2\), directly modulo \(8m\). Therefore
\[
\Fcal_m(-i)=\Fcal_m(i),
\qquad
\Fcal_m(i+8m)=\Fcal_m(i).
\]

\section{Type A: evaluation of the Burnside sum}\label{app:A}

We derive the Type~A formulas in \cref{thm:counts}. Reduction \(U_{2m}\to U_m\) is surjective, and \(u\) and \(-u\) act identically on sign classes, so Burnside's lemma may be applied using \(U_m\).

Fix \(u\in U_m\), and set
\[
A_u=\gcd(m,u-1),
\qquad
B_u=\gcd(m,u+1),
\qquad
c_m=\gcd(m,2)=1+\delta_2(m).
\]
A sign class \([x]_\pm\) is fixed by \(u\) exactly when
\[
(u-1)x=0
\qquad\text{or}\qquad
(u+1)x=0
\]
in \(\mathbb Z_m\). The two solution sets have cardinalities \(A_u\) and \(B_u\), and their intersection has cardinality
\[
\gcd(m,u-1,u+1)=\gcd(m,2)=c_m.
\]
Indeed, every common divisor of \(u-1\) and \(u+1\) divides \(2\). If \(m\) is odd, both sides equal \(1\). If \(m\) is even, then \(u\in U_m\) is odd, so \(2\) divides each of \(m,u-1,u+1\), and both sides equal \(2\).

Removing \(0\), removing the order-two element when \(2\mid m\), and pairing every remaining element with its negative shows that the number of admissible individual Type~A sign-class parameters fixed by \(u\) is
\[
g_m(u)=\frac{A_u+B_u}{2}-c_m.
\]

The fixed Type~A pairs split according to the order of the second parameter. If \(\ord(\beta)\ne3\), there are
\[
\bigl(g_m(u)-\delta_3(m)\bigr)\bigl(g_m(u)-1\bigr)
\]
choices: the first factor chooses \(\beta\) outside the unique order-three sign class, when it exists, and the second then chooses \(\alpha\ne\beta\). If \(\ord(\beta)=3\), admissibility forces \(\alpha\) to be the unique order-six sign class. This contributes one further fixed pair when \(6\mid m\), namely \(\delta_6(m)\). Hence
\[
O_A(m)
=
\frac1{\varphi(m)}
\sum_{u\in U_m}
\left[
\bigl(g_m(u)-\delta_3(m)\bigr)\bigl(g_m(u)-1\bigr)
+\delta_6(m)
\right].
\]

It remains to evaluate this average. We use the three moments
\[
\frac1{\varphi(m)}\sum_{u\in U_m}A_u=\tau(m),
\qquad
\frac1{\varphi(m)}\sum_{u\in U_m}A_u^2=P(m),
\qquad
\frac1{\varphi(m)}\sum_{u\in U_m}A_uB_u=\kappa(m).
\]

For the first identity, use
\[
\gcd(m,u-1)
=
\sum_{\substack{d\mid m\\u\equiv1\;(\mathrm{mod}\ d)}}\varphi(d).
\]
For each \(d\mid m\), surjectivity of \(U_m\to U_d\) shows that exactly \(\varphi(m)/\varphi(d)\) units satisfy \(u\equiv1\pmod d\). Averaging therefore gives one contribution for each divisor \(d\mid m\), namely \(\tau(m)\).

For the second identity, let \(J_2\) be the second Jordan totient function. Since
\[
n^2=\sum_{d\mid n}J_2(d),
\]
we obtain
\[
A_u^2
=
\sum_{\substack{d\mid m\\u\equiv1\;(\mathrm{mod}\ d)}}J_2(d).
\]
Averaging as above gives
\[
\frac1{\varphi(m)}\sum_{u\in U_m}A_u^2
=
\sum_{d\mid m}\frac{J_2(d)}{\varphi(d)}
=
\sum_{d\mid m}\psi(d)
=
P(m),
\]
because \(J_2(d)/\varphi(d)=\psi(d)\).

For the mixed moment, expanding both gcds gives
\[
\frac1{\varphi(m)}\sum_{u\in U_m}A_uB_u
=
\sum_{\substack{d,e\mid m\\ \gcd(d,e)\mid2}}
\frac{\varphi(d)\varphi(e)}{\varphi(\operatorname{lcm}(d,e))}.
\]
Indeed, the simultaneous congruences
\[
u\equiv1\pmod d,
\qquad
u\equiv-1\pmod e
\]
are compatible exactly when \(\gcd(d,e)\mid2\); when compatible, they determine a unit residue modulo \(\operatorname{lcm}(d,e)\), and surjectivity of the reduction map counts its lifts.

The last divisor sum is multiplicative in \(m\). For an odd prime power \(p^a\), compatibility forces at least one of the two local divisor exponents to be zero, so the local factor is
\[
1+2a=2a+1.
\]
For \(2^a\) with \(a\ge1\), the allowed exponent pairs \((i,j)\) satisfy \(\min(i,j)\le1\). The pairs with \(i=0\) or \(j=0\) contribute \(2a+1\), while the additional pairs with \(i=1\) or \(j=1\) contribute \(2a-1\). Thus the local factor is \(4a\). This is exactly the multiplicative function \(\kappa(m)\) defined in \cref{sec:orbits}.

Since \(u\mapsto-u\) interchanges \(A_u\) and \(B_u\), the same first and second moments hold for \(B_u\). Therefore
\[
\frac1{\varphi(m)}\sum_{u\in U_m}g_m(u)
=
\tau(m)-c_m
\]
and
\[
\frac1{\varphi(m)}\sum_{u\in U_m}g_m(u)^2
=
\frac{P(m)+\kappa(m)}2
-2c_m\tau(m)+c_m^2.
\]
Expanding the Burnside expression and substituting \(c_m=1+\delta_2(m)\) yields
\[
\boxed{
O_A(m)=
\frac{P(m)+\kappa(m)}2
-\bigl(3+2\delta_2(m)+\delta_3(m)\bigr)\tau(m)
+2+4\delta_2(m)+2\delta_3(m)+2\delta_6(m).
}
\]

By \cref{lem:orders}, nonzero sign classes fixed by every unit have orders $2,3,4,$ or $6$. Since Type~A excludes order $2$, the admissible ones have orders $3,4,$ or $6$. Thus
\[
s_m=\delta_3(m)+\delta_4(m)+\delta_6(m),
\]
and the same pair count gives
\[
F_A(m)=
\bigl(s_m-\delta_3(m)\bigr)(s_m-1)+\delta_6(m).
\]
If \(6\nmid m\), this is \(0\); if \(6\mid m\) but \(12\nmid m\), it is \(2\); and if \(12\mid m\), it is \(5\). Equivalently,
\[
\boxed{F_A(m)=2\delta_6(m)+3\delta_{12}(m).}
\]

\end{document}